\documentclass[12pt]{article}

\usepackage[english]{babel}
\usepackage{upref,amsfonts,amsxtra,latexsym,color}
\usepackage{amssymb,amsmath,amsthm,a4wide,epsf,mathrsfs,verbatim,hyperref}
\usepackage[all]{xy}

\pagestyle{empty}

\newcommand{\ccF}{\mathfrak F} 
\newcommand{\ccW}{\mathfrak W}

\DeclareMathOperator{\dKK}{d_{KK}}

\newcommand{\cstar}{$\mathrm{C}^*$}
\newcommand{\cst}{\mathrm{C}^*}

\newtheorem{theorem}{Theorem}[subsection]
\newtheorem{lemma}[theorem]{Lemma}
\newtheorem{corollary}[theorem]{Corollary}
\newtheorem{proposition}[theorem]{Proposition}

\theoremstyle{definition}

\numberwithin{equation}{section}
\numberwithin{theorem}{section}

\newcommand{\dist}{\mathrm{dist}}

\newcommand{\sr}{{\rm sr}}

\newcommand{\ep}{{\varepsilon}}

\newcommand{\cU}{{\cal U}}

\newcommand{\cS}{{\cal S}}

\newcommand{\N}{{\mathbb N}}

\newcommand{\R}{{\mathbb R}}
\newcommand{\Cs}{{\cstar-al\-ge\-bra}}




\title{Axiomatizability of the stable rank of \Cs s} 
\author{Ilijas Farah\thanks{Partially supported by NSERC}
\ and Mikael R\o rdam\thanks{Supported  by the Danish National Research Foundation (DNRF) through the Centre for Symmetry and Deformation at University of Copenhagen, and by The Danish Council for Independent Research, Natural Sciences.}}
\date{\today} 

\setcounter{tocdepth}{4}
\setcounter{section}{1}

\begin{document} 
\maketitle

\begin{abstract}  We show that the class of \Cs s with stable rank greater than a given positive integer 
is axiomatizable in logic of metric structures.  As a consequence we show that the stable rank is continuous with respect to forming ultrapowers of \Cs s, and that stable rank is Kadison--Kastler stable.
\end{abstract}

\noindent The notion of stable rank of a \Cs{} was invented by Rieffel in \cite{Rie:Dimension} for the purpose of obtaining results on the non-stable $K$-theory of the \Cs. The stable rank of a \Cs, which can attain values in $\{1,2, \dots, \infty\}$, can be viewed as a ``non-commutative'' dimension of the \Cs; the stable rank of a \Cs{} is one if and only if the invertible elements (in its unitization) are dense. 

It was shown in \cite[\S 3.8]{Muenster} that the class of \Cs s with stable rank equal to one is axiomatizable in logic of metric structures. In this note we improve this result, as stated in the abstract, to all values of the stable rank. 
In \cite[Theorem~6.3]{christensen2010perturbations} it was proved that  if the Kadison-Kastler distance between two \Cs s is less than $1/8$, then if one of the two \Cs s has real rank zero, then so has the other. 
In  \cite[Question~7.3]{christensen2010perturbations} it was asked 
whether higher values of the real rank are 
stable under small perturbations and, likewise, what happens to the stable rank under small perturbations?
Theorem \ref{th:axiomatizable} (ii) provides  the answer to the second part of this question; 
see \cite[\S 3.9 and \S 5.15]{Muenster} for more information on the first part of the question,  stability of higher values of the 
real rank under small perturbations. 

It was shown in \cite{Ror-1988} that if $A$ is a unital \Cs{} with $\sr(A) > 1$, so that the invertible elements in $A$ are not dense, then there is an element $a \in A$ of norm 1 such that the distance from $a$ to the invertible elements of $A$ is equal to 1. We extend this result to the situation where $A$ is a unital \Cs{} with $\sr(A) > n$, for any integer $n \ge 1$ (Corollary~\ref{cor:max-distance}). This is the ingredient in the proof our main result, Theorem~\ref{th:axiomatizable}.

The results in \cite{Ror-1988} were extended by Brown and Pedersen in  \cite{BroPed-1995} to the situation of so-called ``quasi-invertible'' elements. We use the methods from \cite{BroPed-1995} to obtain Corollary~\ref{cor:max-distance}.
 
First we recall the definition of higher stable ranks from Rieffel's paper \cite{Rie:Dimension}. 
 Let $A$ be a unital \Cs. Let $\lg_n(A)$ be the set of $n$-tuples $a=(a_1, \dots, a_n) \in A^n$ for which $A = Aa_1+\cdots + Aa_n$. Identify $A^n$ with $M_{n,1}(A)$ equipped with the usual norm inherited from $M_n(A)$. 
   With this convention, $A^n$ is an $M_n(A)$--$A$ bimodule, and if $a \in A^n$, then $a^* \in M_{1,n}(A)$ (see e.g.\ \cite{lance1995hilbert} for the 
   theory of Hilbert \cstar-modules).  Rieffel defined $\sr(A)$ to be the smallest integer $n \ge 1$ such that $\lg_n(A)$ is dense in $A^n$ (and if no such integer exists, then $\sr(A) = \infty$); 
   see \cite{Rie:Dimension} where stable rank was introduced as the `topological stable rank.'

If $a,b \in A^n$, then $a^*b \in A$ and $ab^* \in M_n(A)$. Note that $a \in \lg_n(A)$ if and only if $a^*a$ is invertible in $A$. In that case, $a = v|a|$, where $|a| = (a^*a)^{1/2} \in A$ and $v = a|a|^{-1} \in A^n$ is an isometry, i.e., $v^*v=1$. Note also 
that $\|a\|^2 = \|a^*a\|=\|\sum_{i\leq n} a_i^*a_i\|$.

We will now assume that $A$ is represented faithfully and non-degenerately on a Hilbert space $H$, so that $A \subseteq B(H)$. Then $A^n \subseteq B(H,H^n)$. Thus each $a \in A^n$ admits a polar decomposition $a = v|a|=|a^*|v$ with $|a|$ as above and with $v$ a partial isometry in $B(H,H^n)$. This polar decomposition agrees with the one above when $a \in \lg_n(A)$, but in general $v$ is not an element of $A^n$.
However, if  $\psi\in C([0,1])$ is such that $\psi(0)=0$ and $a = (a_1, \dots, a_n) \in A^n$ has polar decomposition 
$a=v|a|$, then $v\psi(|a|)$ is a limit of $v P_n(|a|)$ for a sequence of polynomials 
$\{P_n\}$ vanishing at 0. Therefore the $j$th entry of $v\psi(|a|)$ belongs to $\cst(a_j, |a|)$, $1 \le j \le n$, and in particular, $v\psi(|a|)$ belongs to $A^n$. This standard fact will be used several times
below.

The lemma below is an analogue of \cite[Proposition 1.7]{BroPed-1995}.

\begin{lemma} \label{lm:Prop1.7} Let $a \in A^n$, $b \in \lg_n(A)$, and $\beta > \|a-b\|$ be given. Write $b = w|b|$, with $w$ an isometry in $A^n$. It follows that $a+\beta w \in \lg_n(A)$. 
\end{lemma}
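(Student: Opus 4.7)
The plan is to factor out an invertible scalar from $A$ so that the problem reduces to a Neumann-series estimate. First I would write
\[
a + \beta w \;=\; (a-b) + b + \beta w \;=\; (a-b) + w(|b| + \beta),
\]
using $b = w|b|$. Since $|b| \ge 0$ and $\beta > 0$, the element $|b| + \beta \in A$ is invertible with $\|(|b|+\beta)^{-1}\| \le 1/\beta$, so I can factor the right-hand side as
\[
a + \beta w \;=\; \bigl[\,(a-b)(|b|+\beta)^{-1} + w\,\bigr]\,(|b|+\beta).
\]
Right multiplication by an invertible element of $A$ is a bijection on $\lg_n(A)$ (for instance, via the criterion $x \in \lg_n(A) \iff x^*x$ invertible, since $(xr)^*(xr) = r^*x^*xr$), so it will suffice to prove that $c + w \in \lg_n(A)$, where $c := (a-b)(|b|+\beta)^{-1} \in A^n$ satisfies $\|c\| \le \|a-b\|/\beta < 1$.

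To handle this reduced problem I would left-multiply by $w^* \in M_{1,n}(A)$: since $w$ is an isometry, $w^*(c+w) = 1_A + w^*c$, and because $\|w^*c\| \le \|c\| < 1$, the element $u := 1_A + w^*c$ is invertible in $A$ by a Neumann series. Setting $d_i := u^{-1} w_i^* \in A$ then yields
\[
\sum_{i=1}^{n} d_i\,(c+w)_i \;=\; u^{-1} w^*(c+w) \;=\; 1_A,
\]
which exhibits $c + w$ as an element of $\lg_n(A)$ and thus concludes the argument.

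The only conceptually novel point is recognizing the factorization $b + \beta w = w(|b| + \beta)$ and pulling the invertible scalar $(|b|+\beta)$ to the right, because this is what allows one to trade the original perturbation size $\|a-b\| < \beta$ for a contraction $\|c\| < 1$ that is amenable to a Neumann series. Once this factorization is in hand, the remaining work is the standard observation that an additive perturbation of an isometry $w \in A^n$ by an element of norm strictly less than $1$ stays in $\lg_n(A)$. I do not anticipate a serious obstacle, as all the ingredients are elementary; the only place one must be careful is to note that right multiplication by an invertible scalar genuinely preserves $\lg_n(A)$, which is immediate from either of the standard equivalent definitions.
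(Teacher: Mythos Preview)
Your proof is correct and follows essentially the same approach as the paper's. Both arguments hinge on the invertibility of $|b|+\beta$ together with a Neumann-series estimate coming from $\|a-b\|<\beta$; the only cosmetic difference is the order of operations---you factor out $|b|+\beta$ on the right before left-multiplying by $w^*$, whereas the paper applies $w^*$ first and then factors.
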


\begin{proof} 
Since $|b|+\beta \cdot 1$ is bounded below by $\beta \cdot 1$ it is invertible with $\|(|b|+\beta \cdot 1)^{-1}\| \le \beta^{-1}$. Next, 
 $c = w^*(a-b) = w^*a-|b|$ satisfies $\|c\|< \beta$, so $\|c(|b|+\beta 1)^{-1}\|  < 1$, which implies that $c(|b|+\beta)^{-1}+1$ is invertible, being at distance less than $1$ from the identity. 
Therefore  
\[
w^*a+\beta \cdot 1 = c+|b|+\beta \cdot 1 = \big(c(|b|+\beta \cdot 1)^{-1}+1\big)(|b|+\beta \cdot 1)
\]
is an invertible element of $A$. Now, $a+\beta w = w(w^*a+\beta \cdot 1)$ is the product of an element of $\lg_n(A)$ and an invertible element of $A$, and therefore 
belongs to $\lg_n(A)$.
\end{proof}

\noindent
The lemma below, and its proof, closely resembles \cite[Theorem 2.2]{BroPed-1995}, which again was a refinement of \cite[Lemma 2.1]{Ror-1988}. Let us fix some notation. Let $a \in A^n$ with polar decomposition $a = v|a|$ be given (with $v$ a partial isometry in $B(H,H^n)$). For each $\lambda \ge 0$, let $e_\lambda \in B(H)$ and $f_\lambda \in B(H^n)$ denote the spectral projections corresponding to the interval $[0,\lambda]$ of $|a|$ and $|a^*|$, respectively, i.e., $e_\lambda = 1_{[0,\lambda]}(|a|)$ and $f_\lambda = 1_{[0,\lambda]}(|a^*|)$.

\begin{lemma} \label{lm:polar1}
Let $n \ge 1$ be an integer, and let $a \in A^n$ with polar decomposition $a = v|a|$ be given. Let $f_\lambda$ be as above, for $\lambda \ge 0$. Then, for each $\gamma > \dist(a,\lg_n(A))$, there exists $s \in \lg_n(A)$ such that $(1-f_\gamma)v = (1-f_\gamma)s$. 
\end{lemma}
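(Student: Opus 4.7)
The plan is to adapt the approach of \cite[Theorem~2.2]{BroPed-1995} to the $n$-tuple setting. Given $\gamma > \dist(a, \lg_n(A))$, fix $\beta \in (\dist(a, \lg_n(A)), \gamma)$, choose $b \in \lg_n(A)$ with $\|a-b\| < \beta$, and write $b = w|b|$ with $w \in A^n$ an isometry. Pick continuous functions $\phi, \mu \colon [0, \|a\|] \to [0, 1]$ such that $\phi(0) = 0$, $\phi \equiv 1$ on $[\gamma, \infty)$, $\mu \equiv 0$ on $[\gamma, \infty)$, and $\phi + \mu \equiv 1$; concretely, take $\phi(t) = \min(1, t/\gamma)$ and $\mu(t) = (1 - t/\gamma)_+$. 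The candidate element is
\[
s \,:=\, \phi(|a^*|)\, v + \mu(|a^*|)\, b \,\in\, A^n.
\]

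That $s \in A^n$ and $(1-f_\gamma) s = (1-f_\gamma) v$ are both immediate. For the first, $\phi(|a^*|) v \in A^n$ by the standard fact preceding Lemma \ref{lm:Prop1.7} (since $\phi(0)=0$), while $\mu(|a^*|) b \in M_n(A) \cdot A^n$. For the second, on the range of $1-f_\gamma$ (the spectral subspace of $|a^*|$ on $(\gamma, \infty)$), $\phi(|a^*|)$ acts as the identity and $\mu(|a^*|)$ vanishes, so $(1-f_\gamma) \phi(|a^*|) = 1-f_\gamma$ and $(1-f_\gamma) \mu(|a^*|) = 0$.

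The substantive part is to show $s \in \lg_n(A)$, for which it suffices that $w^*s \in A$ be invertible: then $y := (w^*s)^{-1} w^* \in M_{1,n}(A)$ satisfies $y s = 1$, forcing $s^*s$ to be invertible in $A$. Setting $\psi(t) = 1/\max(t, \gamma)$ so that $\phi(|a^*|) v = \psi(|a^*|) a$, and using $w^* b = |b|$ together with $a = b + (a-b)$, a short calculation gives
\[
w^* s \;=\; (Y + M)\,|b| \;+\; w^*\psi(|a^*|)(a-b),\qquad
Y := w^*\psi(|a^*|)\,w, \;\; M := w^*\mu(|a^*|)\,w,
\]
with $Y, M \in A_+$. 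Since $\psi + \mu$ is bounded below by $1/\max(\gamma, \|a\|)$ on $[0, \|a\|]$, one has $Y + M \ge 1/\max(\gamma, \|a\|)$, so $(Y + M)\,|b|$ is invertible in $A$; the perturbation term has norm at most $\|a - b\|/\gamma < 1$.

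The main obstacle is the Neumann-series step closing the argument: to conclude invertibility of $w^*s$ one needs the perturbation's norm to be strictly smaller than $\|((Y+M)|b|)^{-1}\|^{-1}$, and the latter involves $\||b|^{-1}\|$, which is not controlled a priori by the hypothesis $\|a-b\|<\gamma$. The remedy, in the spirit of Lemma \ref{lm:Prop1.7}, is to choose $b$ carefully — for instance of the form $b = a + \beta' w_0$ produced from some auxiliary $b_0 \in \lg_n(A)$ with $\|a - b_0\|$ very small, yielding the explicit estimate $\||b|^{-1}\| \le 1/(\beta' - \|w_0^*(a-b_0)\|)$ — and to then tune $\phi, \mu$ (if necessary with a slightly smaller threshold $\gamma' \in (\beta, \gamma)$ in place of $\gamma$) so that the perturbation is dominated and the argument closes.
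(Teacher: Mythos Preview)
Your candidate $s=\phi(|a^*|)v+\mu(|a^*|)b$ does lie in $A^n$ and satisfies $(1-f_\gamma)s=(1-f_\gamma)v$, but the membership $s\in\lg_n(A)$ is a genuine gap, and your proposed remedy does not close it. With the special choice $b=a+\beta'w_0$ one indeed gets $\||b|^{-1}\|\le 1/(\beta'-\|a-b_0\|)$, but plugging this into your Neumann estimate (with threshold $\gamma'\le\gamma$) yields
\[
\big\||b|^{-1}(Y+M)^{-1}w^*\psi(|a^*|)(a-b)\big\|
\;\le\;\frac{1}{\beta'-\beta_0}\cdot\max(\gamma',\|a\|)\cdot\frac{1}{\gamma'}\cdot\beta'
\;\ge\;\frac{\beta'}{\beta'-\beta_0}\;>\;1,
\]
where $\beta_0=\|a-b_0\|\ge\dist(a,\lg_n(A))$. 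No tuning of $\gamma'$ or $\beta'$ makes this $<1$. In fact your $s$ can genuinely fail to lie in $\lg_n(A)$ for admissible $b$: with $n=1$, $A=M_2(\C)$, $a=\mathrm{diag}(0,2)$, $\gamma=1$, and $b=\bigl(\begin{smallmatrix}0&\varepsilon\\-\varepsilon&2\end{smallmatrix}\bigr)$ (invertible, arbitrarily close to $a$), one computes $s=\bigl(\begin{smallmatrix}0&\varepsilon\\0&1\end{smallmatrix}\bigr)$, which is singular.

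The paper avoids this obstacle by building $s$ \emph{multiplicatively} rather than as a convex combination. It first invokes Lemma~\ref{lm:Prop1.7} to get $a+\beta w\in\lg_n(A)$, and then right-multiplies by \emph{invertible} elements of $A$, namely $(1+\beta\psi(|a|)v^*w)^{-1}$ and $\varphi(|a|)$ with $\varphi(t)=\min\{\gamma^{-1},t^{-1}\}$, $\psi(t)=\min\{t\gamma^{-2},t^{-1}\}$. Since $\lg_n(A)\cdot GL(A)\subseteq\lg_n(A)$, membership in $\lg_n(A)$ is automatic, and the functions are chosen precisely so that on the spectral subspace for $(\gamma,\infty)$ the correction undoes the $+\beta w$ and the $|a|$ factors, giving $(1-f_\gamma)s=(1-f_\gamma)v$. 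The crucial difference is that invertibility of the right factors only requires $\|\beta\psi\|_\infty=\beta/\gamma<1$, with no dependence on $\||b|^{-1}\|$.
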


\begin{proof} 
Choose $\dist(a,\lg_n(A))< \beta <  \gamma$.  Find $b \in \lg_n(A)$ such that $\|a-b\| < \beta$, and write $b = w|b|=|b^*|w$, with $w$ an isometry in $A^n$.  Let $\varphi,\psi \colon \R^+ \to \R^+$ be the continuous functions given by 
\[
\varphi(t) = \min\{\gamma^{-1},t^{-1}\}, \qquad \psi(t) = \min\{t\gamma^{-2},t^{-1}\}, \quad t \in \R^+. 
\]
Then $\psi(|a|)v^* = (v\psi(|a|))^* \in M_{1,n}(A)$ because $\psi(0)=0$; and $\|\beta\psi(|a|)v^*w\| < 1$ because $\|\beta \psi\|_\infty =\beta \gamma^{-1}< 1$. 
Since $\varphi(|a|)$ is bounded below by $\|a\|^{-1} \cdot 1$ and therefore invertible, 
it follows from Lemma~\ref{lm:Prop1.7} that 
\[
s = (a + \beta w)\big(1+\beta\psi(|a|)v^*w\big)^{-1} \varphi(|a|)
\]
belongs to $\lg_n(A)$.

With $e_\lambda$ and $f_\lambda$, $\lambda \ge 0$, as defined as above, note that  
 $v(1-e_\gamma)=(1-f_\gamma)v$, 
 $v(1-e_\gamma)v^*=(1-f_\gamma)$, 
  and 
\[
 (1-e_\gamma)|a|\varphi(a)=(1-e_\gamma)|a|\psi(a)=(1-e_\gamma).
\] 
 Hence,
$$(1-f_{\gamma})(a+\beta w)= v(1-e_{\gamma})|a|+\beta v(1-e_{\gamma})v^*w = v(1-e_{\gamma})|a|\big(1+\beta\psi(|a|)v^*w\big),$$
so 
\[
(1-f_{\gamma})s= v(1-e_{\gamma})|a|\varphi(|a|) = v(1-e_{\gamma}) = (1-f_{\gamma})v,
\]
as required. 
\end{proof}

\noindent As in \cite[Theorem 2.2]{BroPed-1995} and \cite[Theorem 2.2]{Ror-1988} one can improve the lemma above as follows: if $a \in A^n$ and $\gamma$ are as in Lemma~\ref{lm:polar1}, then there exists an isometry  $u \in A^n$ such that $v(1-e_\gamma) = u(1-e_\gamma)$.  However, we shall not need this stronger statement to obtain Corollary \ref{cor:max-distance} and Theorem~\ref{th:axiomatizable}  below.

Recall that for a positive element $a \in A$ and $\lambda \ge 0$ we define $(a-\lambda)_+ = g(a)$, where $g(t) = \max\{t-\lambda,0\}$.

\begin{proposition} \label{prop:distance}
Let $n \ge 1$ be an integer, and let $a \in A^n$ with polar decomposition $a = v|a|$ be given. Then
$$
\dist(a,\lg_n(A)) = \inf \{\lambda  \ge 0 : \exists \; s \in \lg_n(A) \; \text{such that} \; (1-f_\lambda)v = (1-f_\lambda)s\}. 
$$
\end{proposition}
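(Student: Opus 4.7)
The plan is to prove the two inequalities of the proposition separately.

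The inequality $\inf \le \dist(a,\lg_n(A))$ is immediate from Lemma~\ref{lm:polar1}: for every $\gamma > \dist(a,\lg_n(A))$ that lemma produces $s \in \lg_n(A)$ with $(1-f_\gamma)v = (1-f_\gamma)s$, so $\gamma$ lies in the set whose infimum is being computed, and the infimum is therefore at most $\gamma$. Letting $\gamma \searrow \dist(a,\lg_n(A))$ closes this direction.

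For the reverse inequality, the plan is to show that whenever $\lambda \ge 0$ and $s \in \lg_n(A)$ satisfy $(1-f_\lambda)v = (1-f_\lambda)s$, one has $\dist(a,\lg_n(A)) \le \lambda$. I would do this by producing, for each $\epsilon > 0$, an element $b_\epsilon \in \lg_n(A)$ with $\|a-b_\epsilon\| \le \lambda + \epsilon$, from which the bound follows by $\epsilon \to 0$. The construction uses continuous functional calculus in $|a^*|$ to splice $a$ with a correction built from $s$: pick a continuous cutoff $g_\epsilon\colon\R^+\to [0,1]$ equal to $1$ on $[0,\lambda]$, vanishing on $[\lambda+\epsilon,\infty)$, and linear in between, and take a candidate of the form
\[
b_\epsilon := \bigl(1-g_\epsilon(|a^*|)\bigr)\,a \;+\; g_\epsilon(|a^*|)\,t_\epsilon
\]
for an appropriate $t_\epsilon \in A^n$ built from $s$ (e.g.\ a scalar multiple of the isometry $w$ in the polar decomposition $s = w|s|$, noting that $w = s|s|^{-1} \in A^n$). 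Both summands lie in $A^n$: the second because $g_\epsilon(|a^*|)\in M_n(A)$ and $t_\epsilon\in A^n$, the first because $t\mapsto (1-g_\epsilon(t))\,t$ vanishes at $0$, so that the standard fact recalled in the introduction applies.

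The main obstacle is to pick $t_\epsilon$ so as to meet simultaneously (i) $\|a-b_\epsilon\|\le \lambda+\epsilon$ and (ii) $b_\epsilon^*b_\epsilon$ bounded below in $A$. For (i), the identity $g_\epsilon(|a^*|)f_\lambda = f_\lambda$ (since $g_\epsilon \equiv 1$ on $[0,\lambda]$) combined with $(1-f_\lambda)(s-v) = 0$ should cancel the unwanted cross-terms in $a-b_\epsilon$ and leave only pieces controlled by $\|g_\epsilon(|a^*|)\,|a^*|\|\le \lambda+\epsilon$. For (ii), one expands $b_\epsilon^*b_\epsilon$ using $ve_\lambda = f_\lambda v$ (with $e_\lambda = 1_{[0,\lambda]}(|a|)$) to split the contribution into two parts: on the range of $1-e_\lambda$ the $a$-contribution gives $|a|^2\ge \lambda^2$, and on the range of $e_\lambda$ the $t_\epsilon$-contribution, combined with the invertibility of $s^*s$, supplies a strictly positive lower bound. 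Combining these lower bounds gives $b_\epsilon\in \lg_n(A)$, and passing $\epsilon\to 0$ yields $\dist(a,\lg_n(A))\le \lambda$, finishing the proof.
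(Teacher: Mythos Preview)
Your argument for the inequality $\inf \le \dist(a,\lg_n(A))$ via Lemma~\ref{lm:polar1} is correct and identical to the paper's.

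For the reverse inequality there is a genuine gap in step (ii). With your suggested choice $t_\epsilon=\delta w$, take $\xi$ in the range of $e_\lambda$. Then (using $ve_\lambda=f_\lambda v$ and $h_\epsilon(t)=(1-g_\epsilon(t))t$ vanishes on $[0,\lambda]$) one gets $(1-g_\epsilon(|a^*|))a\xi=0$, so $b_\epsilon\xi=\delta\,g_\epsilon(|a^*|)w\xi$. For $b_\epsilon^*b_\epsilon$ to be bounded below you would need $\|g_\epsilon(|a^*|)w\xi\|$ bounded away from $0$, but the hypothesis only gives $(1-f_\lambda)w|s|\xi=(1-f_\lambda)v\xi=0$, i.e.\ $w|s|\xi\in\mathrm{range}(f_\lambda)$; since $|s|$ need not commute with $e_\lambda$, this does not force $w\xi$ into $\mathrm{range}(f_{\lambda+\epsilon})$, and $g_\epsilon(|a^*|)w\xi$ can be $0$. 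Even if you replace $w$ by $s$ (which repairs this particular point, since then $b_\epsilon\xi=\delta s\xi$ with $\|s\xi\|$ bounded below), the split ``range of $e_\lambda$ / range of $1-e_\lambda$'' does not yield a lower bound for $b_\epsilon^*b_\epsilon$: the images $b_\epsilon(\mathrm{range}(e_\lambda))$ and $b_\epsilon(\mathrm{range}(1-e_\lambda))$ are not orthogonal (both meet $\mathrm{range}(f_\lambda)$), so the two partial lower bounds cannot simply be combined. Finally, on $\mathrm{range}(1-e_\lambda)$ you invoke ``$|a|^2\ge\lambda^2$'', but it is $b_\epsilon$, not $a$, whose square you must bound from below, and on the transition band $(\lambda,\lambda+\epsilon)$ the factor $1-g_\epsilon(|a^*|)$ can be arbitrarily small.

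The paper sidesteps all of this by not insisting that the approximant lie in $\lg_n(A)$ itself. Since $(|a^*|-\lambda)_+$ vanishes on $[0,\lambda]$ one has $(|a^*|-\lambda)_+v=(|a^*|-\lambda)_+(1-f_\lambda)v=(|a^*|-\lambda)_+(1-f_\lambda)s=(|a^*|-\lambda)_+s$, and for any positive $c\in M_n(A)$ the element $cs$ lies in the closure of $\lg_n(A)$ because $(c+\ep\cdot 1)s\in\lg_n(A)$ for every $\ep>0$. Hence
\[
\dist(a,\lg_n(A))\le\big\|a-(|a^*|-\lambda)_+v\big\|=\big\|(|a^*|-(|a^*|-\lambda)_+)v\big\|\le\lambda,
\]
which finishes the proof in one line without any splicing or lower-bound analysis.
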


\begin{proof} 
The inequality ``$\ge$'' follows from Lemma~\ref{lm:polar1}. To prove the other inequality, take $\lambda \ge 0$ for which there exists $s \in \lg_n(A)$ with $(1-f_\lambda)v = (1-f_\lambda)s$. For any continuous function $h \colon \R^+ \to \R^+$ which vanishes on $[0,\lambda]$ we have  $h(1-1_{[0,\lambda]})  = h$. Hence 
\[
h(|a^*|)v = h(|a^*|)(1-f_\lambda)v = h(|a^*|)(1-f_\lambda)s = h(|a^*|)s.
\]
In particular, $(|a^*|-\lambda)_+v = (|a^*|-\lambda)_+s$,
and  this element belongs to the closure of $\lg_n(A)$. Indeed, if $s \in \lg_n(A)$ and $c$ is any positive element of $M_n(A)$, then $cs$ belongs to the closure of $\lg_n(A)$, because $c+\ep \cdot 1$ is invertible in $M_n(A)$ for all $\ep >0$, whence $s(c+\ep \cdot 1)$ belongs to $\lg_n(A)$.
 This shows that
\begin{eqnarray*} 
\dist(a, \lg_n(A)) & \le & \|a - (|a^*|-\lambda)_+v\| \\
&=& \|(|a^*| - (|a^*|-\lambda)_+)v)\| \; \le \; \||a^*| - (|a^*|-\lambda)_+\| \;  \le \; \lambda,
\end{eqnarray*} 
as required. 
\end{proof}

\begin{corollary} \label{cor:max-distance} Let $A$ be a unital \Cs{} with $\mathrm{sr}(A) > n$, where $n \ge 1$ is an integer. It follows that there exists $b \in A^n$ such that 
$$\|b\| = \dist(b,\lg_n(A)) = 1.$$
\end{corollary}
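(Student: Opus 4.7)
My plan is to start from any element $a \in A^n$ attaining a positive distance $\delta := \dist(a, \lg_n(A)) > 0$---which must exist by hypothesis, since $\mathrm{sr}(A) > n$ forces $\lg_n(A)$ to be non-dense in $A^n$---and then to truncate $|a^*|$ via continuous functional calculus to normalize both the norm and the distance. Before proceeding I would note that $\delta \le \|a\|$: for every $\varepsilon > 0$ the tuple $(\varepsilon,0,\dots,0)$ lies in $\lg_n(A)$ (as $A$ is unital), so $\dist(0, \lg_n(A)) = 0$ and hence $\delta \le \|a - 0\| = \|a\|$.

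Letting $a = v|a|$ be the polar decomposition, I would set $g(t) := \min(t/\delta,\,1)$ and define
\[
b := v\,g(|a|) = g(|a^*|)\,v \;\in\; A^n,
\]
where membership in $A^n$ follows from the standard polar decomposition fact recalled in the excerpt (using $g(0) = 0$). Because $g(t) > 0$ for all $t > 0$, the support projection of $|b| = g(|a|)$ coincides with that of $|a|$, so $b = v|b|$ is itself the polar decomposition of $b$, with $|b^*| = g(|a^*|)$. Since $\|a\| \ge \delta$ and $g \equiv 1$ on $[\delta,\infty)$, it follows at once that $\|b\| = \|g(|a|)\| = 1$.

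The main step is computing $\dist(b, \lg_n(A))$ via Proposition~\ref{prop:distance}. Writing $f_\mu^{(b)} := 1_{[0,\mu]}(|b^*|)$ and $f_\lambda^{(a)} := 1_{[0,\lambda]}(|a^*|)$, I would use that $g$ is a homeomorphism from $[0,\delta]$ onto $[0,1]$ with inverse $\mu \mapsto \mu\delta$ to conclude $f_\mu^{(b)} = f_{\mu\delta}^{(a)}$ for every $\mu \in [0,1)$. If for some such $\mu$ there existed $s \in \lg_n(A)$ with $(1 - f_\mu^{(b)})v = (1 - f_\mu^{(b)})s$, this would read $(1 - f_{\mu\delta}^{(a)})v = (1 - f_{\mu\delta}^{(a)})s$, and Proposition~\ref{prop:distance} applied to $a$ would force $\delta \le \mu\delta$, i.e.\ $\mu \ge 1$---a contradiction. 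Hence Proposition~\ref{prop:distance} applied to $b$ yields $\dist(b, \lg_n(A)) \ge 1$. The reverse inequality is immediate: $\|b\| = 1$ places the spectrum of $|b^*|$ inside $[0,1]$, so $f_1^{(b)} = 1$ and the condition $(1 - f_1^{(b)})v = (1 - f_1^{(b)})s$ holds vacuously for any $s \in \lg_n(A)$ (say $s = (1,0,\dots,0)$).

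The only delicate point will be tracking the polar decomposition of $b$ carefully---in particular verifying that its partial isometry equals $v$, so that Proposition~\ref{prop:distance} may be applied to $a$ and $b$ with a common partial isometry---and correctly matching up the spectral projections $f_\mu^{(b)}$ and $f_{\mu\delta}^{(a)}$ through the functional calculus. Everything else is a direct application of Proposition~\ref{prop:distance} in both directions.
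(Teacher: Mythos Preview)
Your proof is correct and follows essentially the same route as the paper's: choose $a$ with positive distance $\delta$ to $\lg_n(A)$, set $b = v\,g(|a|)$ with $g(t)=\min(t/\delta,1)$, and compare the spectral projections of $|b^*|=g(|a^*|)$ with those of $|a^*|$ via $1_{[0,\mu]}\circ g = 1_{[0,\mu\delta]}$ to force $\dist(b,\lg_n(A))\ge 1$. The only cosmetic differences are that the paper invokes Lemma~\ref{lm:polar1} directly (rather than the ``$\ge$'' half of Proposition~\ref{prop:distance}) to produce $s$, and obtains the upper bound from $\dist(x,\lg_n(A))\le\|x\|$ rather than from $f_1^{(b)}=1$; your extra care in checking that $v$ is also the partial isometry in the polar decomposition of $b$ is a point the paper leaves implicit.
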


\begin{proof} Take any $a \in A^n$ with $\gamma = \dist(a,\lg_n(A)) > 0$. As above, write $a = v|a|=|a^*|v$ with $v$ a partial isometry in $B(H,H^n)$. Consider the continuous function $h \colon \R^+ \to \R^+$ given by $h(t) = \min\{\gamma^{-1}t,1\}$. Set $b = vh(|a|)=h(|a^*|)v$. Then $b \in A^n$ because $h(0) = 0$, and $|b^*| = h(|a^*|)$. Also, $\|b\| = \|h(|a|))\| \le 1$. Let $f_\lambda$ and $\widetilde{f}_\lambda$ be the spectral projections corresponding to the interval $[0,\lambda]$ for $|a^*|$ and $|b^*| = h(|a^*|)$, respectively. 

Suppose that $\alpha= \dist(b,\lg_n(A)) < 1$. Then, for any $\alpha < \beta <1$, by Lemma~\ref{lm:polar1}, there exists $s \in \lg_n(A)$ such that $(1-\widetilde{f}_\beta)v = (1-\widetilde{f}_\beta)s$. Since $1_{[0,\beta]} \circ h = 1_{[0,\gamma \beta]}$ we get that $\widetilde{f}_\beta =1_{[0,\beta]}(|b^*|) = (1_{[0,\beta]} \circ h)(|a^*|) = f_{\gamma \beta}$. Hence $(1-f_{\gamma \beta})v = (1-f_{\gamma \beta})s$. By Proposition~\ref{prop:distance} this would entail that $\dist(a,\lg_n(A)) \le \gamma \beta < \gamma$, a contradiction. Hence $\dist(b,\lg_n(A)) \ge 1$. This completes the proof, since $\dist(x,\lg_n(A)) \le \|x\|$ holds for all $x \in A^n$ (as $(0, \dots, 0)$ belongs to the closure of $\lg_n(A)$).
\end{proof}

\noindent
Prior to proving that $\sr(A)\geq n$ is axiomatizable for every $n\geq 1$ we recall  some facts and definitions 
from \cite[\S 2.1]{Muenster}. 
Fix $k\geq 1$. 
The space 
 of  formulas of the language of \cstar-algebras with free variables among $\bar x=(x_1,\dots, x_k)$ 
is denoted $\ccF^{\bar x}$
(\cite[Definition~2.1.1]{Muenster}). For $\varphi\in \ccF^{\bar x}$, a \cstar-algebra $A$, 
and a $k$-tuple $\bar a$ in $A$ 
 of the same sort as $\bar x$
(typically $\bar a$ is  a $k$-tuple in the unit ball of $A$), 
by $\varphi^A(\bar a)$ we denote the interpretation of $\varphi$
in $A$ at $\bar a$.   The space~$\ccF^{\bar x}$ has a natural algebra structure, and it is 
 equipped with  the  seminorm
\[
\| \varphi \|_T = \sup  \varphi^A(\bar a),
\]
  where $A$ ranges over all \cstar-algebras and $\bar a$ ranges over all $k$-tuples of
  the same sort as~$\bar x$. The completion of $\ccF^{\bar x}$ with respect to this norm is    a
      Banach algebra, denoted  $\ccW^{\bar x}$, 
       (\cite[\S 3.1(a)]{Muenster}).
       Its elements 
are called  \emph{definable predicates}. Hence a definable predicate is an assignment
of predicates (i.e., real-valued uniformly continuous functions on $A^k$)  of a particular form to \cstar-algebras. 
An assignment of closed subsets to \cstar-algebras $A\mapsto \cS^A\subseteq A^k$ 
is a \emph{definable set} (\cite[Definition~3.2.1]{Muenster}) 
if for any formula $\psi(\bar x,y)$ of the language of \cstar-algebras
both 
 $\sup_{y\in\cS^A} \psi(\bar x,y)^A$ 
 and $\inf_{y\in \cS^A} \psi(\bar x,y)^A$ 
are definable predicates (inconveniently, being a definable set 
is a bit stronger than being the zero-set of a definable predicate). 
An extension of a language of metric structures is \emph{conservative} if 
every model in the original language can be expanded uniquely to a model of the new language. 
In particular, an extension obtained by adding definable predicates and 
allowing quantification (i.e., taking sups and infs) 
over definable sets is conservative.

 \begin{lemma} \label{lm:definable} 
For any $n\geq 1$,  the extension of the language of \cstar-algebras 
 by allowing quantification over the unit ball 
$A^n_1$  of $A^n$ and adding a predicate $F_n$ for  the standard norm on~$A^n$ 
  is conservative. 
 \end{lemma}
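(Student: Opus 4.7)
The plan is to reduce both additions to the language -- the predicate $F_n$ and the new quantifiers over $A^n_1$ -- to features already available in the original language, so that every model of the original language has a unique expansion. This boils down to showing that (a) $F_n$ is a definable predicate of the original language, and (b) $A^n_1$ is a definable set, in the sense that the new quantifiers can be replaced by quantifiers over sorts already present.

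For (a), I would use the identity $F_n(a_1,\dots,a_n)^2 = \|\sum_{i\le n} a_i^*a_i\|$, which expresses $F_n$ in terms of the \cstar-operations, the norm on $A$, and the scalar map $t\mapsto t^{1/2}$. For each $r>0$, the restriction of $F_n$ to the bounded sort $A_r^n$ is then a formula of the original language, and these restrictions cohere, so $F_n$ is a definable predicate.

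For (b), the first observation is that for any $(a_1,\dots,a_n)\in A^n_1$ one has $\|a_i\|^2 \le \|\sum_j a_j^*a_j\| \le 1$, so $A^n_1\subseteq A_1^n$. I would then introduce the retraction
\[
\pi\colon A_1^n \to A^n_1, \qquad \pi(z_1,\dots,z_n) = \Big(\frac{z_1}{\max\{1,F_n(\bar z)\}},\dots,\frac{z_n}{\max\{1,F_n(\bar z)\}}\Big),
\]
which is continuous, fixes $A^n_1$ pointwise, and maps $A_1^n$ onto $A^n_1$. For any formula $\psi(\bar x,\bar y)$ with $\bar y$ a new variable ranging over $A^n_1$, the equations
\[
\sup_{\bar y \in A^n_1} \psi(\bar x,\bar y) = \sup_{\bar z \in A_1^n} \psi(\bar x,\pi(\bar z)), \qquad \inf_{\bar y \in A^n_1} \psi(\bar x,\bar y) = \inf_{\bar z \in A_1^n} \psi(\bar x,\pi(\bar z))
\]
then let us trade the new quantifier for an iterated quantifier over $A_1$ (already in the language) composed with the substitution $\bar y\mapsto\pi(\bar z)$. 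An induction on formula complexity then replaces, in any formula of the extended language, every occurrence of $F_n$ by its defining formula from step (a) and every new quantifier by its $\pi$-translate, producing an equivalent formula in the original language.

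The main technical step is the formal justification that $\psi(\bar x,\pi(\bar z))$ remains a definable predicate when $\psi$ is; this follows from the standard closure of definable predicates under composition with uniformly continuous, definable operations of the \cstar-algebra language (cf.\ \cite[\S 2.1 and \S 3.1]{Muenster}), applied coordinate-by-coordinate to $\pi$ via the scalar operation $t\mapsto 1/\max\{1,t\}$ and the scalar-times-element operation. Once this is in hand conservativity is immediate: the interpretations of $F_n$ and of the new quantifiers are forced to be the canonical ones on every \cstar-algebra, and the reduction above confirms that nothing outside the original language is thereby expressible.
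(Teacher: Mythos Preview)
Your approach differs genuinely from the paper's. The paper argues semantically: $F_n$ commutes with ultraproducts, so by Beth Definability it is a definable predicate; likewise $A^n_1$ commutes with ultraproducts, so by the ultraproduct characterization of definable sets it is definable. Your route is syntactic and, for part~(a), cleaner: the identity $F_n(\bar a)=\|\sum_i a_i^*a_i\|^{1/2}$ already exhibits $F_n$ as a quantifier-free formula composed with the continuous connective $t\mapsto t^{1/2}$, with no appeal to Beth needed.

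Part~(b), however, has a gap. Your retraction $\pi$ multiplies each $z_i$ by the \emph{scalar} $1/\max\{1,F_n(\bar z)\}$, and that scalar is the value of a formula, not a term. In the language of \cstar-algebras the scalar multiplications are a family of unary function symbols indexed by fixed $\lambda\in\C$; there is no binary ``scalar-times-element'' operation accepting a real-valued predicate as one argument, so your appeal to closure under composition does not show that $\psi(\bar x,\pi(\bar z))$ is a definable predicate. The repair is painless and keeps your argument intact: replace $\pi$ by the functional-calculus retraction
\[
\pi'(\bar z)=\big(z_1\,g(c),\dots,z_n\,g(c)\big),\qquad c=\sum_j z_j^*z_j,\quad g(t)=\big(\max\{1,t\}\big)^{-1/2}.
\]
Each coordinate $z_i\,g(c)$ is a uniform limit of genuine terms $z_i\,p(c)$ with $p$ polynomial, so substitution into $\psi$ yields a definable predicate; $\pi'$ fixes $A^n_1$ (since $g\equiv 1$ on $[0,1]$); and $\sum_i (z_ig(c))^*(z_ig(c))=g(c)\,c\,g(c)$ has norm at most $1$, so $\pi'$ maps $A_1^n$ onto $A^n_1$. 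With this change your retraction argument goes through and gives a direct proof, complementary to the paper's ultraproduct route.
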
 

\begin{proof} It suffices to prove that $F_n$ 
is a definable predicate and the unit ball of $A^n$ is a definable set.  
 This proof is virtually identical to the proof that the operator norm on $M_n(A)$ is a definable predicate
 and the unit ball of $M_n(A)$ is a definable subset of $A^{n^2}$ (\cite[Lemma~4.2.3]{Muenster}). 

It is clear that function $F_n$ commutes with taking ultraproducts, i.e.,
if $A_j$, for $j\in J$, are \cstar-algebras, $\cU$ is an ultrafilter on $J$, and $A=\prod_{\cU} A_j$,  then 
for any $\bar a\in A$ and any representing sequence $(\bar a_j)$ of $\bar a\in A$ we have  
$F_n^A(\bar a)=\lim_{j\to \cU} F_n^{A_j}(\bar a_j)$. 
Therefore the class  ${\mathcal C}'$  of all structures of the form $(A,F_n^A)$, where $A$ is a \cstar-algebra, 
is closed under taking ultraproducts and ultraroots, and by \cite[Theorem~2.4.1]{Muenster}
axiomatizable.  
Beth Definability Theorem (\cite[Theorem~4.2.1]{Muenster}) now implies that $F_n$ is a definable predicate. 
Since every  element in  $A^n_1$  has a representing sequence in  $\prod_j (A_j)^n_1$, 
we have  $A^n_1=\prod_{\cU}(A_j)^n_1$. 
Therefore~$A^n_1$ is a definable set by  \cite[Theorem~3.2.5]{Muenster}. 
This completes the proof. 
\end{proof} 

We shall also need the fact that 
the  set $A_+$ of positive elements in a \cstar-algebra $A$ is definable (\cite[Example~3.2.6 (2)]{Muenster})
and therefore the language extended by allowing quantification over the positive part of the unit ball is
a conservative extension of the language of \cstar-algebras. 

\begin{lemma} \label{lm:phi} 
For $n\geq 1$, consider the sentence 
\[
\varphi_n=\sup_{x\in A^n, \|x\|\leq 1} \; 
\inf_{v\in A^n, \|v\|\leq 1} \; 
\inf_{y\in A_+, \|y\|\leq 1}
\max(\|x-vy\|, \|v^*v-1\|)
\]
in the extended language of unital \cstar-algebras. 
For every unital \cstar-algebra $A$ we have~$\sr(A)\leq n$ if and only if $\varphi_n^A=0$, and 
$\sr(A)>n$ if and only if $\varphi_n^A\geq 1$. 
\end{lemma}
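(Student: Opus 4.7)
The plan is to show separately that $\sr(A)\leq n$ forces $\varphi_n^A=0$ and that $\sr(A)>n$ forces $\varphi_n^A\geq 1$; since these two alternatives are exhaustive the two converses will then follow automatically.

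For the first implication, I would fix $x$ in the unit ball of $A^n$ and $\varepsilon>0$, use density of $\lg_n(A)$ to produce $b\in\lg_n(A)$ with $\|x-b\|<\varepsilon$, and polar-decompose $b=w|b|$ with $w$ an isometry in $A^n$ (so that $w^*w=1$ exactly). The element $|b|$ need not be a contraction, but $\||b|\|=\|b\|\leq 1+\varepsilon$, so truncating via the continuous function $h(t)=\min\{t,1\}$ yields a positive contraction $y:=h(|b|)$ with $\||b|-y\|\leq\varepsilon$, hence $\|wy-b\|=\|w(|b|-y)\|=\||b|-y\|\leq\varepsilon$ and $\|x-wy\|\leq 2\varepsilon$. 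The witness $(w,y)$ thus shows that the inner infimum is at most $2\varepsilon$ for every $x$ and every $\varepsilon>0$, so $\varphi_n^A=0$.

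For the second implication, I would apply Corollary~\ref{cor:max-distance} to obtain $b\in A^n$ with $\|b\|=\dist(b,\lg_n(A))=1$ and test the outer supremum at $x=b$. For any candidate pair $(v,y)$ with $\|v\|\leq 1$ and $y$ a positive contraction I would split on the size of $\|v^*v-1\|$. If $\|v^*v-1\|\geq 1$ the inner maximum is already $\geq 1$. Otherwise $v^*v\geq(1-\|v^*v-1\|)\cdot 1>0$ is invertible, so for every $\delta>0$ one computes
\[
\bigl(v(y+\delta\cdot 1)\bigr)^{*}v(y+\delta\cdot 1)=(y+\delta)v^*v(y+\delta)\geq(1-\|v^*v-1\|)\delta^2\cdot 1>0,
\]
which places $v(y+\delta\cdot 1)$ in $\lg_n(A)$. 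Sending $\delta\to 0$ gives $vy\in\overline{\lg_n(A)}$, whence $\|b-vy\|\geq\dist(b,\lg_n(A))=1$. Either way the inner maximum is $\geq 1$, so $\varphi_n^A\geq 1$.

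The only mildly delicate point is that the $v$ appearing in $\varphi_n$ is only an \emph{approximate} isometry rather than an exact one; this is precisely what makes the case split on $\|v^*v-1\|$ necessary in the second implication. Once both directions are established, the sentence $\varphi_n$ is known to sit inside the conservative extension of the language of unital \cstar-algebras furnished by Lemma~\ref{lm:definable} together with the definability of $(A_+)_1$ recorded just before the statement, so $\varphi_n$ can be regarded as a sentence in the original language without loss.
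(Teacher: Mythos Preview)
Your proof is correct and follows essentially the same route as the paper's: both directions hinge on Corollary~\ref{cor:max-distance} for the hard implication, and on the polar decomposition of elements of $\lg_n(A)$ for the easy one. The only cosmetic difference is that for the implication $\sr(A)\leq n\Rightarrow\varphi_n^A=0$ the paper invokes \cite[Lemma~3.8.4]{Muenster}, whereas you supply the short direct argument (truncating $|b|$ to a contraction) yourself; your version is self-contained and arguably cleaner.
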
 

\begin{proof} By a standard geometric series argument,    $\|v^*v-1\|<1$ implies that $v^*v$ 
is invertible and hence that
$v=(v_1,\dots, v_n)$ is in~$\lg_n(A)$.  
It was proved in   \cite[Lemma~3.8.4]{Muenster} 
that  $a=(a_1,\dots, a_n)\in A^n$ such that $\max_{i\leq n} \|a_i\|\leq 1$ 
 is in the closure of~$\lg_n(A)$ if and only if it  can be approximated arbitrarily well 
by elements of the form $vy$, for $v\in A^n$ and $y\in A_+$,
such that $\|v^*v-1\|<1$
 and  $\|y\|\leq \sqrt n$. After renormalization of $A^n$,  we have   
 that $a\in A^n_1$  is in the closure of~$\lg_n(A)$ if and only if it  can be approximated arbitrarily well 
by elements of the form $vy$, for $v\in A^n$ and $y\in A_+$,
such that $\|v^*v-1\|<1$ and  $\|y\|\leq 1$. 
Therefore  $\varphi_n^A=0$ is equivalent to    $\sr(A)\leq n$, and   consequently 
  $\varphi_n^A\geq 1$ implies $\sr(A)>n$. 
  
   It  remains to prove that $\sr(A)>n$ implies 
$\varphi_n^A\geq 1$.   
Suppose otherwise, that $\sr(A)>n$ and $\varphi_n^A<1$. 
Let $b\in A^n$ be as in Corollary~\ref{cor:max-distance}, so that 
$\|b\| = \dist(b,\lg_n(A)) = 1$. Since $\varphi_n^A<1$, there exist $v\in A^n$ and $y\in A_+$ such that $\|y\|\leq 1$,  
$\|b-vy\|<1$, and $\|v^*v-1\|<1$. The latter formula implies $v\in \lg_n(A)$. 
With $\ep>0$ small enough to have $\|b-v(y+\ep \cdot 1)\|<1$, we have $v (y+\ep \cdot 1)\in \lg_n(A)$, 
contradicting the choice of $b$. 
\end{proof} 

\noindent Recall that the  \emph{Kadison--Kastler distance} between subalgebras $A$ and $B$ of $B(H)$ for a fixed Hilbert space $H$ is defined as  
\[
\dKK(A,B)=\max(\sup_{x\in A_1}\inf_{y\in B_1} \|x-y\|, \sup_{y\in B_1} \inf_{x\in A_1} \|x-y\|),
\]
writing $A_1$ and $B_1$ for the unit balls of $A$ and $B$, respectively. Thus $\dKK(A,B)$ 
is equal to the Hausdorff distance between $A_1$ and $B_1$.

\begin{theorem} \label{th:axiomatizable} For every integer $n\geq 1$ the classes of \cstar-algebras with 
stable rank greater than $n$, less than or equal to $n$, and equal to $n$, respectively,  are axiomatizable in logic of metric structures. 
In particular:
\begin{enumerate}
\item\label{I.ultra}  If $A_m$, for $m\in \N$, are unital \cstar-algebras and $\cU$ is an ultrafilter on $\N$, then 
\[
\sr\big(\prod_{\cU} A_m\big)=\lim_{n\to \cU} \sr(A_n).  
\]
In particular, stable rank is preserved under ultrapowers. 
\item\label{I.KK.1}  For every $n\geq 1$,
there exists $\ep_n>0$ such that for  any two unital \cstar-subalgebras  $A$ and $B$ of $B(H)$ with 
$\dKK(A,B)<\ep_n$, either $\sr(A)=\sr(B)$ or both of $\sr(A)$ and $\sr(B)$ are greater than $n$.
\end{enumerate}
\end{theorem}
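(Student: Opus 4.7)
The plan is to build everything from the sentence $\varphi_n$ of Lemma~\ref{lm:phi}, which enjoys the crucial dichotomy $\varphi_n^A \in \{0\} \cup [1,\infty)$: explicitly, $\varphi_n^A = 0$ iff $\sr(A) \le n$, and $\varphi_n^A \ge 1$ iff $\sr(A) > n$. Since $\varphi_n$ lives in the extension of the language of \cstar-algebras obtained by adding the norm predicate $F_n$ on $A^n$ and quantification over $A^n_1$, which is conservative by Lemma~\ref{lm:definable}, any axiomatization in the extended language descends to an axiomatization in the original. The class $\sr \le n$ is then axiomatized by the single condition ``$\varphi_n = 0$''; the class $\sr > n$ by ``$\max(1 - \varphi_n, 0) = 0$'' (the dichotomy is essential here, as otherwise the strict inequality $\sr > n$ would not be the zero-set of a continuous predicate); and the class $\sr = n$ arises as the intersection of $\sr \le n$ with $\sr > n-1$ (taking $\sr > 0$ as vacuous).

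For part~\ref{I.ultra}, I would appeal to the continuous version of \L{}o\'s's theorem: each of the axiomatizable classes $\sr \le n$ and $\sr > n$ is closed under ultraproducts. Let $r = \lim_{m \to \cU} \sr(A_m) \in \{1, 2, \dots, \infty\}$. If $r$ is finite then $\{m : \sr(A_m) = r\} \in \cU$ yields both $\sr(\prod_\cU A_m) \le r$ and, for $r \ge 2$, $\sr(\prod_\cU A_m) > r-1$, hence equality. If $r = \infty$ then for every $n$ the ultraproduct satisfies $\sr > n$, giving $\sr(\prod_\cU A_m) = \infty$.

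For part~\ref{I.KK.1} the plan is to upgrade axiomatizability to a quantitative continuity statement: for each $n$ I would produce $\delta_n > 0$ such that $\dKK(A, B) < \delta_n$ forces $|\varphi_n^A - \varphi_n^B| < 1/2$. Combined with the gap in the range of $\varphi_n$, this immediately yields $\sr(A) \le n \Leftrightarrow \sr(B) \le n$. Setting $\varepsilon_n := \min\{\delta_1, \dots, \delta_n\}$ gives this equivalence simultaneously for every $m \le n$, so if both stable ranks are $\le n$, say $\sr(A) = k$, the equivalences at levels $k$ and $k-1$ force $\sr(B) = k$; otherwise both must exceed $n$.

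The main obstacle is the KK-continuity bound for $\varphi_n$. Its quantifiers range over $A^n_1$ and over the positive part of $A_1$, rather than over $A_1$, but this is manageable: given $(a_1, \dots, a_n) \in A^n_1$, coordinate-wise KK-approximations $b_i \in B_1$ with $\|a_i - b_i\| < \eta$ yield $\|(a_i) - (b_i)\|_{A^n}^2 = \|\sum_i (a_i - b_i)^*(a_i - b_i)\| \le n\eta^2$, so after a tiny rescaling $(b_1, \dots, b_n)$ lies in $B^n_1$ at distance $O(\sqrt{n}\,\eta)$ from $(a_1, \dots, a_n)$; approximations to positive elements are handled similarly via continuous functional calculus applied to $(b + b^*)/2$. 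The quantifier-free matrix inside $\varphi_n$ is built from norms, products, adjoints, and subtractions of elements of bounded norm, hence is uniformly continuous in its inputs; propagating this modulus through the three nested sup/inf and taking $\eta$ small enough delivers the required $\delta_n$. This parallels the treatment of real rank in \cite[\S 5.15]{Muenster}.
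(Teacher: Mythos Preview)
Your proposal is correct and follows essentially the same route as the paper: both use the gap $\varphi_n^A\in\{0\}\cup[1,\infty)$ from Lemma~\ref{lm:phi} together with the conservativity in Lemma~\ref{lm:definable} to axiomatize the three classes, then deduce \ref{I.ultra} via \L{}o\'s's theorem and \ref{I.KK.1} via Kadison--Kastler continuity of $\varphi_n$. The only cosmetic differences are that the paper cites \cite[Lemma~5.15.1]{Muenster} for the KK-continuity while you sketch it by hand, and you should also note (as the paper does) that quantification over the positive part of the unit ball is definable, since $\varphi_n$ uses it.
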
 

\begin{proof} In \cite[Proposition~3.8.1]{Muenster} it was proved that having stable rank at most $n$ is axiomatizable. 
By Lemma~\ref{lm:phi} and Lemma~\ref{lm:definable} 
 having stable rank greater than $n$ is also axiomatizable, and the axiomatizability of stable rank being equal to $n$ 
 follows.  

By \L o\'s's Theorem (\cite[Theorem~2.3.1]{Muenster}) with $\varphi_n$ as in Lemma~\ref{lm:phi} 
we have $\varphi_n ^{\prod_{\cU} A_m}=\lim_{m\to \cU} \varphi_n^{A_m}$, 
and therefore \ref{I.ultra} follows. We infer from 
\cite[Lemma~5.15.1]{Muenster} that  the evaluation of $\varphi_n$ 
is continuous with respect to $\dKK$ for every integer $n \ge 1$. Choose $\ep_n>0$ small enough so 
that $\dKK(A,B)<\ep_n$ implies $\|\varphi_j^A-\varphi_j^B\|<1$ for all $j\leq n$ and for all unital \cstar-subalgebras  $A$ and $B$ of $B(H)$. 
Then $\sr(A)=j<n$ implies $\sr(B)=j$, as required.  
\end{proof}

{\small
{
\bibliographystyle{amsplain}
\bibliography{stablerank} 

\providecommand{\bysame}{\leavevmode\hbox to3em{\hrulefill}\thinspace}
\providecommand{\MR}{\relax\ifhmode\unskip\space\fi MR }
\providecommand{\MRhref}[2]{%
  \href{http://www.ams.org/mathscinet-getitem?mr=#1}{#2}
}
\providecommand{\href}[2]{#2}
\begin{thebibliography}{1}

\bibitem{BroPed-1995}
L.~G. Brown and G.~K. Pedersen, \emph{On the geometry of the unit ball of a
  \cstar-algebra}, J.\ Reine Angew.\ Math. \textbf{469} (1995), 113--147.

\bibitem{christensen2010perturbations}
E.~Christensen, A.~M.\ Sinclair, R.~R.\ Smith, and S.~A.\ White,
  \emph{Perturbations of {\cstar}-algebraic invariants}, Geom. Funct. Anal.
  \textbf{20} (2010), no.~2, 368--397.

\bibitem{Muenster}
I.~Farah, B.~Hart, M.~Lupini, L.~Robert, A.~Tikuisis, A.~Vignati, and
  W.~Winter, \emph{Model theory of nuclear \cstar-algebras}, arXiv preprint
  arXiv:1602.08072 (2016).

\bibitem{lance1995hilbert}
E.~C. Lance, \emph{Hilbert \cstar-modules: a toolkit for operator algebraists},
  London Mathematical Society Lecture Note Series, vol. 210, Cambridge
  University Press, 1995.

\bibitem{Rie:Dimension}
M.~A Rieffel, \emph{Dimension and stable rank in the {K}-theory of
  \cstar-algebras}, Proc. London Math. Soc \textbf{46} (1983), no.~3, 301--333.

\bibitem{Ror-1988}
M.~R{\o}rdam, \emph{Advances in the theory of unitary rank and regular
  approximation}, Ann. of Math. \textbf{128} (1988), 153--172.

\end{thebibliography}

%
%
%
%
%

\end{document}